\newtheorem{theorem}{Theorem}[section]
\newtheorem{lemma}[theorem]{Lemma}
\theoremstyle{definition}
\theoremstyle{remark}
\theoremstyle{example}
\theoremstyle{fact}
\numberwithin{equation}{section}
\begin{document}
\title[Inverse scattering problems]{Inverse scattering problems where the potential is not absolutely continuous on the known interior
subinterval}
\author[Y. Guo]{Yongxia Guo}
\address{School of Mathematics and Information Science, Shaanxi Normal
University, Xi'an 710062, PR China}
\email{hailang615@126.com}
\author[G. Wei]{Guangsheng Wei$^{*}$}
\thanks{$^{*}$ Corresponding author.}
\address{School of Mathematics and Information Science, Shaanxi Normal
University, Xi'an 710062, PR China}
\email{weimath@snnu.edu.cn}
\subjclass[2012]{Primary 34A55; Secondary 34L25, 34L40}
\keywords{Schr$\mathrm{\ddot{o}}$dinger equation, inverse scattering
problem, potential recovery with partial data}

\begin{abstract}
The inverse scattering problem for the Schr$\mathrm{\ddot{o}}$dinger
operators on the line is considered when the potential is real valued and
integrable and has a finite first moment. It is shown that the potential on
the line is uniquely determined by the left (or right) reflection
coefficient alone provided that the potential is known on a finite interval
and it is not absolutely continuous on this known interval.
\end{abstract}

\maketitle
\date{}

\section{Introduction}

We consider the inverse scattering problems for one dimensional Schr$\mathrm{\ddot{o}}$dinger operators on the real line and analyze the unique recovery
of their potentials with the information known on a finite interval $[a,b]$.
Let $H$ be the self-adjoint Schr$\mathrm{\ddot{o}}$dinger operator on $L^{2}(\mathbb{R})$
\begin{equation}
H=-\frac{d^{2}}{dx^{2}}+V(x),  \label{1.1}
\end{equation}%
where the potential $V$ is real valued and belongs to $L_{1}^{1}(\mathbb{R})$, the class of measurable functions on the real axis $\mathbb{R}$ such that
$\int_{-\infty }^{\infty }(1+|x|)\left\vert V(x)\right\vert \mathrm{d}x$ is
finite.

The main purpose of the present paper is to prove the following theorem,
which is associated with the unique determination of the potentials on the
whole line.

\begin{theorem}
\label{blem1.1} Let $V$ be a real-valued potential belonging to $L_{1}^{1}(\mathbb{R}).$ If $V$ is a priori known on a finite interval $[a,b]$ and
it is not absolutely continuous on $[a,b]$, then $V$ on the whole line is uniquely determined by
either the left refection coefficient $L(k)$ or the right refection
coefficient $R(k)$ for $k\in \mathbb{R}$.
\end{theorem}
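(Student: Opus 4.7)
The plan is to argue by contradiction. Suppose $V_1, V_2 \in L_{1}^{1}(\mathbb{R})$ both agree with the prescribed $V$ on $[a,b]$ and produce the same left reflection coefficient $L(k)$; the aim is to conclude $V_1 = V_2$ a.e. By the Marchenko uniqueness theorem for the full line, a potential in $L_{1}^{1}(\mathbb{R})$ is determined by the triple $(L(k), \{-\kappa_j^2\}_{j=1}^N, \{c_j\}_{j=1}^N)$ of left scattering data, so any discrepancy between $V_1$ and $V_2$ must reside in the bound-state eigenvalues or norming constants. A classical result of inverse scattering theory (Deift--Trubowitz) then shows that $V_2$ can be obtained from $V_1$ by a finite Crum chain of Darboux transformations that insert or remove bound states.

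Explicitly, such a chain gives
\begin{equation*}
V_2(x) - V_1(x) = -2\,\frac{d^2}{dx^2} \log D(x), \qquad D(x) = W[\phi_1, \ldots, \phi_m](x),
\end{equation*}
where each $\phi_n$ is a real solution of $-\phi_n'' + V_1 \phi_n = -\kappa_n^2 \phi_n$ chosen for the $n$-th transformation. The hypothesis $V_1 = V_2$ on $[a,b]$ forces $(\log D)'' \equiv 0$ on $[a,b]$, so $D(x) = C e^{\alpha x}$ on that interval for some constants $C,\alpha$.

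The central step is to translate this exponential rigidity of $D$ into regularity of $V_1$ on $[a,b]$. When $m=1$, $\phi_1 = C e^{\alpha x}$ on $[a,b]$ combined with $\phi_1'' = (V_1 + \kappa_1^2)\phi_1$ yields immediately $V_1 \equiv \alpha^2 - \kappa_1^2$, a constant. When $m=2$, setting $u_n = \phi_n'/\phi_n$, the identities $D = C e^{\alpha x}$ and $D' = (\kappa_2^2 - \kappa_1^2)\phi_1\phi_2$ give $u_1 + u_2 = \alpha$, and the Riccati equations $u_n' + u_n^2 = V_1 + \kappa_n^2$ then express $V_1$ on $[a,b]$ as a smooth combination of $1$, $e^{-\alpha x}$, and $e^{-2\alpha x}$. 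By induction on $m$, the exponential form of $D$ algebraically forces $V_1$ on $[a,b]$ to be real-analytic, hence in particular absolutely continuous. This contradicts the standing hypothesis that $V = V_1|_{[a,b]}$ is not absolutely continuous, so no nontrivial Crum chain can link $V_1$ and $V_2$; their bound-state data coincide, and $V_1 = V_2$ on $\mathbb{R}$.

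The main obstacle is carrying out the inductive step for general $m$: showing rigorously that an exponential $D$ on $[a,b]$, combined with the Schr\"odinger constraints for $\phi_1, \ldots, \phi_m$, algebraically propagates to real-analyticity of $V_1$ on $[a,b]$. The natural approach is a coupled Riccati analysis of the logarithmic derivatives $u_n = \phi_n'/\phi_n$, exploiting the identity $\sum_n u_n = \alpha$ obtained from differentiating $\log D$, while carefully handling possible multiplicities among the $\kappa_n^2$. The case of the right reflection coefficient $R(k)$ then follows by the symmetric argument under reflection $x \mapsto -x$.
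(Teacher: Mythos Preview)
Your approach is genuinely different from the paper's and, in spirit, is a natural one: replace the Marchenko kernel analysis by the explicit Crum--Darboux formula linking two potentials with the same reflection coefficient. The paper instead works directly with the degenerate Marchenko kernel, differentiates the identity
\[
B(x,x)=\tfrac12\int_{-\infty}^x (V-\tilde V)\,dt
\]
repeatedly on $[a,b]$, and uses the non-absolute-continuity of $V$ at \emph{each} differentiation step to kill an emerging constant $C_l$; this produces a Vandermonde system in the energies $\kappa_j^2,\tilde\kappa_j^2$ which forces the bound-state data to match. That argument never needs to know the structure of $D$ for general $m$, which is exactly where your sketch runs into trouble.

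There are two concrete gaps. First, the identity you plan to use in the inductive step,
\[
(\log D)'=\sum_{n=1}^m u_n,\qquad u_n=\phi_n'/\phi_n,
\]
is simply false for Wronskians with $m\ge 3$: for example with $m=3$ one has $D'=\det\bigl([\phi_j],[\phi_j'],[\kappa_j^2\phi_j']\bigr)$, which is not $D\cdot\sum u_n$. So the ``coupled Riccati analysis'' you propose does not get off the ground in the form stated, and the induction remains genuinely open. A workable route exists---one can show by row-reduction that a suitable low-order derivative of $D$ equals $V$ times a nonvanishing polynomial in the $\phi_j,\phi_j'$ plus another such polynomial, and then bootstrap regularity---but this is a different argument from the one you outline and needs its own care (including handling zeros of the $\phi_n$ on $[a,b]$, which your Riccati substitutions tacitly exclude).

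Second, the reduction to a Wronskian $D=W[\phi_1,\dots,\phi_m]$ of solutions of the $V_1$-equation is delicate precisely in the case the theorem is designed for: when $V_1$ and $V_2$ share a bound-state energy $-\kappa^2$ but with different norming constants. Removing and re-inserting at the same energy makes two of the $\phi_n$ solve the same equation at the same energy, so their contribution to the Wronskian degenerates; the correct Darboux step in that case involves an integral $\int\phi^2$ rather than a second solution, and the formula $V_2-V_1=-2(\log W[\phi_1,\dots,\phi_m])''$ must be replaced by a Gram-type determinant. You would need to treat this case separately. The paper's Vandermonde argument handles equal and unequal energies uniformly, which is one of its advantages.
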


There are many results (see \cite{akt4,akt3,akt,bra,dei,ges,gre,kli,nov,ram1,run,sac} and the references
therein) related to the inverse scattering problem for one-dimensional Schr$\mathrm{\ddot{o}}$dinger equations defined on the entire real
line $\mathbb{R}$ with incomplete scattering data. These results show that if the potential
is known on a half-line, then the norming constants and even bound state
energies are not needed to recover the potential uniquely (some of these
papers are limited to the case where $V$ is assumed to vanish on a
half-line). In 1994, Weder (cf., \cite[p.222]{akt}) raised a question of
whether one can uniquely reconstruct $V$ by using the mixed scattering data
consisting of the bound state energies, the reflection coefficient $L(k)$
(or $R(k)$) for $k\in \mathbb{R}$ and the knowledge of the potential on a
finite interval $[a,b]$, i.e., all the bound state norming constants are
missing. Aktosun and Weder \cite{akt2} analyzed this inverse problem when
only one norming constant is missing, and proved that the missing norming
constant in the data can cause at most a double nonuniqueness in the
recovery, for which they illustrated the nonuniqueness with some explicit
examples. This enlighten us that, when the potential is known a priori on a
finite interval, we need additional condition to obtain the uniqueness for
such type of inverse scattering problems. Our Theorem 1.1 here gives an
effective answer to the uniqueness problem.

The method we use is a generalization of that used by Wei and Xu \cite{wei},
for which the basic idea is to relate our data to the Marchenko integral
equations that both integral equations have generalized degeneracy (see \cite{lev1,pur}) in the case that the part associated with the continuous
spectrum being the same for two systems.

\section{Proof of Theorem 1.1}

Consider the radial Schr$\mathrm{\ddot{o}}$dinger equation
\begin{equation}
-y^{\prime \prime }(k,x)+V(x)y(k,x)=k^{2}y(k,x),\text{ \ \ \ \ }x\in \mathbb{R},  \label{2.1}
\end{equation}
where $k^{2}$ is energy, $x$ is the space coordinate and the prime denotes
the derivative with respect to $x.$ It is known \cite[pp. 284-286]{mar} that
the scattering states of (\ref{2.1}) correspond to its solutions behaving
like $e^{ikx}$ or $e^{-ikx}$\ as $x\rightarrow \pm \infty.$ Such solutions
are the Jost solution from the left $f_{l}(k,x)$ and the Jost solution from
the right $f_{r}(k,x)$ satisfying
\begin{equation}
f_{l}(k,x)=\left\{
\begin{split}
& e^{ikx}+o(1),\ \ &&x\rightarrow +\infty , \\
& \frac{e^{ikx}}{T(k)}+\frac{L(k)}{T(k)}e^{-ikx}+o(1),\ \ &&x\rightarrow
-\infty ;
\end{split}
\right.  \label{2.2}
\end{equation}
\begin{equation}
f_{r}(k,x)=\left\{
\begin{split}
& \frac{e^{-ikx}}{T(k)}+\frac{R(k)}{T(k)}e^{ikx}+o(1),\ \ &&x\rightarrow
+\infty , \\
& e^{-ikx}+o(1),\ \ &&x\rightarrow -\infty .
\end{split}
\right.  \label{2.3}
\end{equation}
Here $T$ is the transmission coefficient, and $L$ and $R$ are the reflection
coefficients from the left and right, respectively. The bound states
correspond to the square-integrable solution of (\ref{2.1}), and such states
occur only at certain values $k=i\kappa _{j}$ on $\mathbb{I}^{+\text{{}}}:=i(0,+\infty )$ for $j=1,\cdots ,N$, which are exactly the poles
of $T(k)$. The so-called scattering data consists of
\begin{equation}
\left\{ L(k),\text{ }k\in \mathbf{\mathbb{R}}\right\} \cup \left\{ \kappa_{j},\text{ }m_{j}^{-}\right\} _{j=1}^{N}\text{ \ \ \ \ \textrm{or}\ \ \ }
\left\{ R(k),\text{ }k\in \mathbf{\mathbb{R}}\right\} \cup \left\{ \kappa_{j},\text{ }m_{j}^{+}\right\} _{j=1}^{N},\text{ }  \label{3.1}
\end{equation}%
where $m_{j}^{\pm }$ are the bound state norming constants corresponding to
the bound state energy $-\kappa _{j}^{2}$ defined as
\begin{equation}
m_{j}^{-}=||f_{r}(i\kappa _{j},\cdot )||^{-2},\text{ \ \ \ }
m_{j}^{+}=||f_{l}(i\kappa _{j},\cdot )||^{-2}.  \label{2.4}
\end{equation}%
It is well known (see, for example, \cite{dei,mar}) that the above
scattering data uniquely determines the potential $V$ on the whole line.

Before proving Theorem \ref{blem1.1} we shall first mention two lemmas which
will be needed later.

\begin{lemma}
\label{blem2.1} Let $y(k,x)$ be the nontrivial solution of the equation
\begin{equation}
-y^{\prime \prime }(k,x)+V(x)y(k,x)=k^{2}y(k,x),\ \ \ x\in [a,b],
\label{4.20}
\end{equation}
where $-\infty <a<b<+\infty .$ Then there exist a finite number of zeros of $y(k,x)$ on $[a,b]$, moreover these zeros are all simple.
\end{lemma}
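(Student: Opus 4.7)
The plan is to prove the two assertions in the order: simplicity first, then finiteness, with both following from the standard uniqueness theorem for Carathéodory solutions of the Schrödinger equation with $L^{1}_{\mathrm{loc}}$ potential. Since $V\in L^{1}_{1}(\mathbb{R})$ restricts to an integrable function on $[a,b]$, any solution $y(k,\cdot)$ of \eqref{4.20} is absolutely continuous together with its derivative $y'(k,\cdot)$, and the map $x_{0}\mapsto(y(k,x_{0}),y'(k,x_{0}))$ determines $y(k,\cdot)$ uniquely on $[a,b]$. In particular, if $y(k,x_{0})=y'(k,x_{0})=0$ at some $x_{0}\in[a,b]$, then $y\equiv 0$, contradicting nontriviality.

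For simplicity of zeros, I would argue directly from the preceding observation: at every point $x_{0}\in[a,b]$ where $y(k,x_{0})=0$, one must have $y'(k,x_{0})\neq 0$, so $x_{0}$ is a simple zero.

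For finiteness, I would argue by contradiction. Suppose $y(k,\cdot)$ has infinitely many zeros $\{x_{n}\}\subset[a,b]$. By compactness, some subsequence converges to a point $x^{\ast}\in[a,b]$, and continuity of $y(k,\cdot)$ yields $y(k,x^{\ast})=0$. Between any two consecutive zeros of $y(k,\cdot)$, Rolle's theorem (applicable since $y(k,\cdot)$ is continuously differentiable) provides a zero of $y'(k,\cdot)$; thus there is a sequence of zeros of $y'(k,\cdot)$ also accumulating at $x^{\ast}$. Continuity of $y'(k,\cdot)$ then forces $y'(k,x^{\ast})=0$. Combined with $y(k,x^{\ast})=0$, the uniqueness argument above gives $y\equiv 0$, a contradiction.

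The only potential obstacle is a subtlety of regularity: one must be sure that the uniqueness of the initial value problem and the continuous differentiability of $y(k,\cdot)$ hold under the merely $L^{1}$ hypothesis on $V$. This is a standard Carathéodory result obtained by recasting \eqref{4.20} as the Volterra integral equation $y(k,x)=y(k,x_{0})+(x-x_{0})y'(k,x_{0})+\int_{x_{0}}^{x}(x-t)(V(t)-k^{2})y(k,t)\,\mathrm{d}t$ and applying a Grönwall-type estimate, so no additional hypothesis on $V$ beyond integrability on $[a,b]$ is required. Once this is in place, both conclusions of the lemma follow as sketched.
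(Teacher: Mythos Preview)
Your argument is correct and complete. The paper itself does not give a proof at all: it simply writes ``The proof of the lemma is straightforward by \cite{weid}'' and defers to Weidmann's book. Your proposal supplies precisely the standard elementary argument that such a citation points to---uniqueness of the Carath\'eodory initial value problem forces simplicity, and a compactness-plus-Rolle argument rules out accumulation of zeros. So the approaches are not really different in spirit; you have merely unpacked what the paper leaves implicit, and your treatment of the $L^{1}$ regularity issue via the Volterra integral reformulation is exactly the right justification for why the uniqueness theorem applies under the stated hypotheses on $V$.
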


\begin{proof}
The proof of the lemma is straightforward by \cite{weid}.
\end{proof}

When the parameter $k$ takes different finite values as $k=k_{s}$ for $s=1,\cdots ,n,$ it is easy to see that the number of all zeros of
$y(k_{s},x) $ on $[a,b]$ is also finite. This implies that there exists a
common point $x^{\prime }\in \lbrack a,b]$ such that $y(k_{s},x^{\prime})\neq 0$ for all $1\leq s\leq n.$

\begin{lemma}
\label{blem2.2} Let $\lambda _{1}<\lambda _{2}<\cdots <\lambda _{n}$ and
$\tilde{\lambda}_{1}<\tilde{\lambda}_{2}<\cdots <\tilde{\lambda}_{\tilde{n}}$ with $n\geq \tilde{n}$. Denote the $m\times n$ Vandermonde matrix
associated with entries $\{\lambda _{j}\}_{j=1}^{n}$ by
$V_{m\times n}\left[\lambda _{j}\right] _{j=1}^{n}$, that is,
\begin{equation}
V_{m\times n}\left[ \lambda _{j}\right] _{j=1}^{n}=
\begin{pmatrix}
1 & 1 & \cdots & 1 \\
\lambda _{1} & \lambda _{2} & \cdots & \lambda _{n} \\
\begin{array}{c}
\vdots%
\end{array}
& \vdots & \vdots & \vdots \\
\lambda _{1}^{m-1} & \lambda _{2}^{m-1} & \cdots & \lambda _{n}^{m-1}
\end{pmatrix}
.  \label{2.6}
\end{equation}
If there exists $m^{\prime }\leq \tilde{n}$ satisfying $\lambda _{j}=$\ $\tilde{\lambda}_{j}$ for $j=1,2,\cdots ,m^{\prime }$, and
$m:=n+\tilde{n}-m^{\prime }$,
\begin{equation}
V_{m\times n}[\lambda _{j}]_{j=1}^{n}A=V_{m\times \tilde{n}}[\tilde{\lambda}_{j}]_{j=1}^{\tilde{n}}\tilde{A},  \label{3.10}
\end{equation}%
where $A=[a_{1},\cdots ,a_{n}]^{T}\in \mathbb{R}^{n}$ and $\tilde{A}=[a_{1},\cdots ,\tilde{a}_{\tilde{n}}]^{T}\in \mathbb{R}^{\tilde{n}}$ are
such that $a_{j}\neq 0$\ and $\tilde{a}_{j}\neq 0$\ for all $1\leq j\leq \tilde{n}$,
then $\lambda _{j}=$\ $\tilde{\lambda}_{j}$, $a_{j}=$ $\tilde{a}_{j}$
for all $j=1,2,\cdots ,\tilde{n}$\ and $a_{j}=0$\ for $j=\tilde{n}+1,\cdots ,n$. In particular, in the case where $m^{\prime }=0$, the result
above still holds true.
\end{lemma}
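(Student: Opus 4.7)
The plan is to reinterpret the matrix identity (\ref{3.10}) as a system of $m$ moment conditions on the signed measure
\[
\mu \;:=\; \sum_{j=1}^{n} a_{j}\,\delta_{\lambda_{j}} \;-\; \sum_{j=1}^{\tilde n} \tilde a_{j}\,\delta_{\tilde\lambda_{j}},
\]
namely $\int x^{i}\,d\mu = 0$ for $i = 0, 1, \ldots, m-1$; equivalently, every polynomial of degree less than $m$ integrates to zero against $\mu$. The first step is to bound the support of $\mu$: since the first $m'$ entries of the two sequences coincide, the support sits in a set of at most $m' + (n-m') + (\tilde n - m') = n + \tilde n - m' = m$ distinct points.

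I would then argue that any signed measure concentrated on at most $m$ points whose moments of orders $0, 1, \ldots, m-1$ all vanish must be the zero measure. Indeed, writing the support as $\{x_{1}, \ldots, x_{p}\}$ with $p \le m$, the coefficients satisfy a homogeneous system whose matrix contains an invertible $p \times p$ Vandermonde block (its rows taken from the $m \times p$ Vandermonde built on the $x_{i}$), so all coefficients vanish. Hence the total weight of $\mu$ at every point of its support must be $0$.

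Next I would translate this pointwise vanishing into algebraic identities. At each common node $\lambda_{j} = \tilde\lambda_{j}$ for $j \le m'$, the total weight is $a_{j} - \tilde a_{j}$, whence $a_{j} = \tilde a_{j}$ on this range. For $m' < j \le \tilde n$, the hypothesis $\tilde a_{j} \ne 0$ forces the atom $-\tilde a_{j} \delta_{\tilde\lambda_{j}}$ to be cancelled, which can happen only if $\tilde\lambda_{j} = \lambda_{i}$ for some $i > m'$ with $a_{i} = \tilde a_{j}$. Symmetrically, the hypothesis $a_{j} \ne 0$ for $m' < j \le \tilde n$ forces each such $\lambda_{j}$ to coincide with some $\tilde\lambda_{k}$. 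These mutual inclusions, together with the strict ordering of both sequences, yield the set equality
\[
\{\lambda_{m'+1},\ldots,\lambda_{\tilde n}\} = \{\tilde\lambda_{m'+1},\ldots,\tilde\lambda_{\tilde n}\},
\]
which, since both sides are strictly increasing of the same length, upgrades to the termwise identification $\lambda_{j} = \tilde\lambda_{j}$, $a_{j} = \tilde a_{j}$ for $1 \le j \le \tilde n$. The $\tilde\lambda$'s are then exhausted among the first $\tilde n$ of the $\lambda$'s, so the weight at $\lambda_{j}$ for $j > \tilde n$ reduces to $a_{j}$, forcing $a_{j} = 0$ on this tail.

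I expect the main obstacle to be bookkeeping: both the cardinality bound on $\operatorname{supp}(\mu)$ and the pointwise weight analysis must handle possible coincidences between $\lambda_{i}$ ($i > m'$) and $\tilde\lambda_{j}$ ($j > m'$) carefully, since each such coincidence merges two atoms of $\mu$ into one and the two ``nonvanishing'' hypotheses play asymmetric roles depending on which side of the matching one looks at. The degenerate case $m' = 0$ creates no extra trouble: the support bound $\#\operatorname{supp}(\mu) \le n + \tilde n = m$ still holds without any assumed overlap, and the same set-equality plus ordering argument closes the proof.
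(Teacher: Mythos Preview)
Your argument is correct. Recasting (\ref{3.10}) as the vanishing of the first $m$ moments of the signed measure $\mu=\sum a_j\delta_{\lambda_j}-\sum\tilde a_j\delta_{\tilde\lambda_j}$, bounding $\#\operatorname{supp}\mu\le n+\tilde n-m'=m$, and then invoking the invertibility of a square Vandermonde block to force $\mu=0$ is a clean and complete route; the subsequent pointwise weight analysis you outline (using $a_j\ne0$ and $\tilde a_j\ne0$ for $j\le\tilde n$, plus strict monotonicity of both sequences) does give the termwise identification and the vanishing tail. One small clarification worth making explicit: the second inclusion $\{\lambda_{m'+1},\ldots,\lambda_{\tilde n}\}\subseteq\{\tilde\lambda_{m'+1},\ldots,\tilde\lambda_{\tilde n}\}$ already yields the set equality by cardinality, so the ``mutual inclusions'' phrasing is slightly redundant---but harmless.

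As for comparison with the paper: the authors do not actually prove this lemma. Their proof consists of the single sentence ``The proof of the lemma is derived from \cite[Lemma 3.1]{wei},'' i.e., they defer entirely to an earlier paper of Wei and Xu. Your write-up is therefore strictly more informative than what appears here: it supplies a short, self-contained linear-algebra/moment argument where the paper only gives a pointer. There is no conflict in method to report---your approach is presumably close in spirit to what the cited reference does, but the present paper contains nothing to compare against.
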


\begin{proof}
The proof of the lemma is derived from \cite[Lemma 3.1]{wei}.
\end{proof}

For our purpose of this paper, together with the Schr$\mathrm{\ddot{o}}$dinger operator $H$ defined by (\ref{1.1}),
we consider another operator $\tilde{H}$ of the same form but with different coefficient $\tilde{V}$,
i.e., we consider another Schr$\mathrm{\ddot{o}}$dinger equation
\begin{equation}
-y^{\prime \prime }(k,x)+\tilde{V}(x)y(k,x)=k^{2}y(k,x),\text{ \ \ \ \ }x\in \mathbb{R}\text{.}  \label{2.5}
\end{equation}
We agree that, everywhere below if a symbol $\nu$ denotes an object related
to $H$, then $\tilde{\nu}$ will denote the analogous object related to $\tilde{H}$.

It is known that \cite[pp. 132-133]{lev} the Marchenko integral equation as
used in inverse scattering problems associated with the two operators $H$
and $\tilde{H}$ may be written as
\begin{equation}
B(x,y)+\Phi (x,y)+\int_{-\infty }^{x}B(x,t)\Phi (t,y)\mathrm{d}t=0,
\label{2.8}
\end{equation}
where\ $y<x$ and the function $\Phi (x,y)$\ has the following form
\begin{align}
\Phi (x,y)=& \frac{1}{2\pi }\int_{-\infty }^{\infty }[L(k)-\tilde{L}(k)] \tilde{f}_{r}(k,x)\tilde{f}_{r}(k,y)\mathrm{d}k  \notag  \label{2.9} \\
& +\sum\limits_{j=1}^{N}m_{j}^{-}\tilde{f}_{r}(i\kappa _{j},x)\tilde{f}_{r}(i\kappa _{j},y)-\sum\limits_{j=1}^{\tilde{N}}\tilde{m}_{j}^{-}\tilde{f}_{r}(i\tilde{\kappa}_{j},x)\tilde{f}_{r}(i\tilde{\kappa}_{j},y).
\end{align}%
Here $\tilde{f}_{r}(k,x)$ is the Jost solution of Eq. (\ref{2.5}) from the
left and $\tilde{m}_{j}^{-}$ is the Marchenko norming constant is similarly
defined by (\ref{2.4}) corresponding to the bound state $i\tilde{\kappa}_{j}$. Further, the function $B(x,y)$ satisfies the differential equation
\begin{equation}
\frac{\partial ^{2}B}{\partial x^{2}}-V(x)B=\frac{\partial ^{2}B}{\partial y^{2}}-\tilde{V}(y)B  \label{4.6}
\end{equation}
and condition
\begin{equation}
B(x,x)=\frac{1}{2}\int_{-\infty }^{x}[V(t)-\tilde{V}(t)]\mathrm{d}t.
\label{4.7}
\end{equation}
As a transformation operator, we have
\begin{equation}
f_{r}(k,x)=\tilde{f}_{r}(k,x)+\int_{-\infty }^{x}B(x,t)\tilde{f}_{r}(k,t) \mathrm{d}t.  \label{2.10}
\end{equation}
Similar results that related to the scattering data $\{R(k),\tilde{R}(k),k\in \mathbf{\mathbb{R}}\}\cup \{\kappa _{j},$
$m_{j}^{+}\}_{j=1}^{N}\cup \{\tilde{\kappa}_{j},$ $\tilde{m}_{j}^{+}\}_{j=1}^{\tilde{N}}$ are also
valid for the two operators $H$ and $\tilde{H}.$

By making use the Marchenko integral equation (\ref{2.8}), we are now in a
position to give the proof of Theorem \ref{blem1.1}.

\begin{proof}[Proof of Theorem \protect\ref{blem1.1}]
For the sake of simplicity, we shall only consider the uniqueness problem
for the left refection coefficient $L(k),$ the case for $R(k)$ can be
treated similarly. Consider two Schr$\mathrm{\ddot{o}}$dinger operators $H$
and $\tilde{H}.$ Under the hypothesis of Theorem \ref{blem1.1}, we have $L(k)=\tilde{L}(k)$ for $k\in \mathbb{R}$, $V(x)=\tilde{V}(x)$ a.e. on $[a,b]$, where two functions $V$ and $\tilde{V}$ are not absolutely continuous on $[a,b]$. Our purpose here is to prove $V=\tilde{V}$ a.e. on $\mathbb{R}$.

\textbf{Step 1.} We show that
\begin{equation}
\sum\limits_{j=1}^{N}(\kappa _{j}^{2})^{l}m_{j}^{-}(f_{r}\tilde{f}
_{r})(i\kappa _{j},x)=\sum\limits_{j=1}^{\tilde{N}}(\tilde{\kappa}
_{j}^{2})^{l}\tilde{m}_{j}^{-}(f_{r}\tilde{f}_{r})(i\tilde{\kappa}_{j},x)
\label{2.7}
\end{equation}
for $x\in [a,b]$ and $l=0,1,\cdots ,2M-1$ with $M=N+\tilde{N}.$

Since $L(k)=\tilde{L}(k)$ for $k\in \mathbb{R}$, it follows from (\ref{2.9})
that
\begin{equation}
\Phi (x,y)=\sum\limits_{j=1}^{N}m_{j}^{-}\tilde{f}_{r}(i\kappa _{j},x)\tilde{f}_{r}(i\kappa _{j},y)-
\sum\limits_{j=1}^{\tilde{N}}\tilde{m}_{j}^{-}\tilde{f}_{r}(i\tilde{\kappa}_{j},x)\tilde{f}_{r}(i\tilde{\kappa}_{j},y),
\label{2.13}
\end{equation}
which together with (\ref{2.8}) and (\ref{2.10}) yields
\begin{align}
B(x,y)& =-\Phi (x,y)-\int_{-\infty }^{x}B(x,t)\Phi (t,y)\mathrm{d}t  \notag
\\
& =\sum\limits_{j=1}^{\tilde{N}}\tilde{m}_{j}^{-}\tilde{f}_{r}(i\tilde{\kappa}_{j},x)\tilde{f}_{r}(i\tilde{\kappa}_{j},y)+
\sum\limits_{j=1}^{\tilde{N}}\tilde{m}_{j}^{-}\tilde{f}_{r}(i\tilde{\kappa}_{j},y)\int_{-\infty}^{x}B(x,t)\tilde{f}_{r}(i\tilde{\kappa}_{j},t)\mathrm{d}t  \notag \\
& \ \ -\sum\limits_{j=1}^{N}m_{j}^{-}\tilde{f}_{r}(i\kappa _{j},x)\tilde{f}_{r}(i\kappa _{j},y)-
\sum\limits_{j=1}^{N}m_{j}^{-}\tilde{f}_{r}(i\kappa_{j},y)\int_{-\infty }^{x}B(x,t)\tilde{f}_{r}(i\kappa _{j},t)\mathrm{d}t
\notag \\
& =\sum\limits_{j=1}^{\tilde{N}}\tilde{m}_{j}^{-}f_{r}(i\tilde{\kappa}_{j},x)\tilde{f}_{r}(i\tilde{\kappa}_{j},y)-
\sum\limits_{j=1}^{N}m_{j}^{-}f_{r}(i \kappa _{j},x)\tilde{f}_{r}(i\kappa _{j},y).  \label{2.14}
\end{align}%
It can be checked from \cite[Theorem 4.15(b)]{kir} that the solution $B(x,y)$ of the boundary value problem (\ref{4.6})-(\ref{4.7}) is a continuous function
on $\Omega =\{(x,y)\in \mathbb{R}^{2}:y\leq x\}.$ By (\ref{4.7}) and (\ref{2.14}) we have for $x\in \mathbb{R}$ that
\begin{equation}
\sum\limits_{j=1}^{\tilde{N}}\tilde{m}_{j}^{-}(f_{r}\tilde{f}_{r})(i\tilde{\kappa}_{j},x)-
\sum\limits_{j=1}^{N}m_{j}^{-}(f_{r}\tilde{f}_{r})(i\kappa_{j},x)=\frac{1}{2}\int_{-\infty }^{x}[V(t)-\tilde{V}(t)]\mathrm{d}t.
\label{4.9}
\end{equation}
Note that the condition $V(x)=\tilde{V}(x)$ a.e. for $x\in [a,b]$
yields
\begin{equation*}
\int_{-\infty }^{x}[\tilde{V}(t)-V(t)]\mathrm{d}t=\int_{-\infty }^{a}[\tilde{V}(t)-V(t)]\mathrm{d}t\text{ \ \ \ \textrm{for} }x\in [a,b].
\end{equation*}
This together with (\ref{4.9}) implies that for all $x\in [a,b]$
\begin{align}
& \sum\limits_{j=1}^{\tilde{N}}\tilde{m}_{j}^{-}(f_{r}\tilde{f}_{r})(i\tilde{\kappa}_{j},x)-
\sum\limits_{j=1}^{N}m_{j}^{-}(f_{r}\tilde{f}_{r})(i\kappa_{j},x)  \notag \\
=& \frac{1}{2}\int_{-\infty }^{a}[\tilde{V}(t)-V(t)]\mathrm{d}t  \notag \\
=& :C_{0}.  \label{2.15}
\end{align}
Differentiating Eq. (\ref{2.15}) with respect to $x,$ we infer for $x\in [a,b]$ that
\begin{equation}
\sum\limits_{j=1}^{\tilde{N}}\tilde{m}_{j}^{-}(f_{r}\tilde{f}_{r})^{\prime}(i\tilde{\kappa}_{j},x)-
\sum\limits_{j=1}^{N}m_{j}^{-}(f_{r}\tilde{f}_{r})^{\prime }(i\kappa _{j},x)=0.  \label{2.16}
\end{equation}
It should be noted that
\begin{align}
(f_{r}\tilde{f}_{r})^{\prime \prime }(k,x)=& (f_{r}^{\prime \prime }\tilde{f}_{r})(k,x)+(f_{r}\tilde{f}_{r}^{\prime \prime })(k,x)+2(f_{r}^{\prime }\tilde{f}_{r}^{\prime })(k,x)  \notag \\
=& (V(x)-k^{2})(f_{r}\tilde{f}_{r})(k,x)+(\tilde{V}(x)-k^{2})(f_{r}\tilde{f}_{r})(k,x)+2(f_{r}^{\prime }\tilde{f}_{r}^{\prime })(k,x)  \notag \\
=& 2(V(x)-k^{2})(f_{r}\tilde{f}_{r})(k,x)+2(f_{r}^{\prime }\tilde{f}_{r}^{\prime })(k,x)\text{ \ }\mathrm{a.e.}\text{ }\mathrm{on}\text{ }[a,b],
\text{\ }  \label{4.10}
\end{align}
where the last equation follows from the the condition $V(x)=\tilde{V}(x)$
a.e. on $[a,b].$ Differentiating Eq. (\ref{2.15}) with respect to $x$ for
twice, in other words, differentiating Eq. (\ref{2.16}) with respect to $x$,
we derive from (\ref{4.10}) that
\begin{align*}
& \sum\limits_{j=1}^{\tilde{N}}\tilde{m}_{j}^{-}[(V(x)+\tilde{\kappa}_{j}^{2})(f_{r}\tilde{f}_{r})(i\tilde{\kappa}_{j},x)+(f_{r}^{\prime }
\tilde{f}_{r}^{\prime })(i\tilde{\kappa}_{j},x)] \\
& \ \ \ \ \ \ \ \ -\sum\limits_{j=1}^{N}m_{j}^{-}[(V(x)+\kappa_{j}^{2})(f_{r}\tilde{f}_{r})(i\kappa _{j},x)+
(f_{r}^{\prime }\tilde{f}_{r}^{\prime })(i\kappa _{j},x)]=0\text{ \ \ }\mathrm{a.e.}\text{ }\mathrm{on}\text{ }[a,b].
\end{align*}
This together with (\ref{2.15}) gives that
\begin{align}
& \sum\limits_{j=1}^{\tilde{N}}\tilde{m}_{j}^{-}[\tilde{\kappa}_{j}^{2}(f_{r}\tilde{f}_{r})+(f_{r}^{\prime }\tilde{f}_{r}^{\prime })](i\tilde{\kappa}_{j},x)-
\sum\limits_{j=1}^{N}m_{j}^{-}[\kappa _{j}^{2}(f_{r}\tilde{f}_{r})+(f_{r}^{\prime }\tilde{f}_{r}^{\prime })](i\kappa _{j},x)  \notag \\
=& -V(x)\left[ \sum\limits_{j=1}^{\tilde{N}}\tilde{m}_{j}^{-}(f_{r}\tilde{f}_{r})(i\tilde{\kappa}_{j},x)-
\sum\limits_{j=1}^{N}m_{j}^{-}(f_{r}\tilde{f}_{r})(i\kappa _{j},x)\right]  \notag \\
=& -C_{0}V(x)\text{ \ \ }\mathrm{a.e.}\text{ }\mathrm{on}\text{ }[a,b].
\label{3.11}
\end{align}
On the one hand, the function of LHS of (\ref{3.11}) is an absolutely
continuous function on $[a,b],$ since the functions $f_{r}(k,x)$ and $\tilde{f}_{r}(k,x)$ are the solutions of (\ref{2.1}) and (\ref{2.5}), respectively.
On the other hand, the function $V(x)$ of RHS of (\ref{3.11}) is not absolutely continuous on $[a,b]$. Therefore, we infer that
\begin{equation}
C_{0}=0,  \label{4.3}
\end{equation}
and (\ref{2.15}) turns into
\begin{equation}
\sum\limits_{j=1}^{\tilde{N}}\tilde{m}_{j}^{-}(f_{r}\tilde{f}_{r})(i\tilde{\kappa}_{j},x)-\sum\limits_{j=1}^{N}m_{j}^{-}(f_{r}\tilde{f}_{r})(i\kappa_{j},x)=0.  \label{3.15}
\end{equation}

Furthermore, based on (\ref{4.3}), we have from (\ref{3.11}) that
\begin{equation}
\sum\limits_{j=1}^{\tilde{N}}\tilde{m}_{j}^{-}[\tilde{\kappa}_{j}^{2}(f_{r}\tilde{f}_{r})+(f_{r}^{\prime }\tilde{f}_{r}^{\prime })](i\tilde{\kappa}_{j},x)-
\sum\limits_{j=1}^{N}m_{j}^{-}[\kappa _{j}^{2}(f_{r}\tilde{f}_{r})+(f_{r}^{\prime }\tilde{f}_{r}^{\prime })](i\kappa _{j},x)=0.
\label{4.4}
\end{equation}
It should be noted that
\begin{align}
(f_{r}^{\prime }\tilde{f}_{r}^{\prime })^{\prime }(k,x)=& (f_{r}^{\prime
\prime }\tilde{f}_{r}^{\prime })(k,x)+(f_{r}^{\prime }\tilde{f}_{r}^{\prime
\prime })(k,x)  \notag \\
=& (V(x)-k^{2})(f_{r}\tilde{f}_{r}^{\prime })(k,x)+(\tilde{V}
(x)-k^{2})(f_{r}^{\prime }\tilde{f}_{r})(k,x)  \notag \\
=& (V(x)-k^{2})(f_{r}\tilde{f}_{r})^{\prime }(k,x),\text{ \ \ }\mathrm{a.e.}
\text{ }\mathrm{on}\text{ }[a,b],  \label{3.21}
\end{align}
Differentiating also Eq. (\ref{2.15}) with respect to $x$ for the third time
(i.e., differentiating Eq. (\ref{4.4}) with respect to $x$), we have from (\ref{3.21}) that
\begin{equation*}
\sum\limits_{j=1}^{\tilde{N}}\tilde{m}_{j}^{-}(V(x)+2\tilde{\kappa}_{j}^{2})(f_{r}\tilde{f}_{r})^{\prime }(i\tilde{\kappa}_{j},x)-
\sum \limits_{j=1}^{N}m_{j}^{-}(V(x)+2\kappa _{j}^{2})(f_{r}\tilde{f}_{r})^{\prime }(i\kappa _{j},x)=0.
\end{equation*}
This together with (\ref{2.16}) yields that
\begin{align}
& \sum\limits_{j=1}^{\tilde{N}}\tilde{m}_{j}^{-}\tilde{\kappa}_{j}^{2}(f_{r}
\tilde{f}_{r})^{\prime }(i\tilde{\kappa}_{j},x)-\sum
\limits_{j=1}^{N}m_{j}^{-}\kappa _{j}^{2}(f_{r}\tilde{f}_{r})^{\prime}(i\kappa _{j},x)  \notag \\
=& -\frac{V(x)}{2}\left[ \sum\limits_{j=1}^{\tilde{N}}\tilde{m}_{j}^{-}(f_{r}
\tilde{f}_{r})^{\prime }(i\tilde{\kappa}_{j},x)-\sum
\limits_{j=1}^{N}m_{j}^{-}(f_{r}\tilde{f}_{r})^{\prime }(i\kappa _{j},x) \right]  \notag \\
=& 0   \text{ \ \ }\mathrm{a.e.}\text{ }\mathrm{on}\text{ }[a,b].  \label{2.18}
\end{align}
Integrating Eq. (\ref{2.18}) from $a$ to $x$ with $x\in [a,b]$ gives
\begin{align}
& \sum\limits_{j=1}^{\tilde{N}}\tilde{m}_{j}^{-}\tilde{\kappa}_{j}^{2}(f_{r}
\tilde{f}_{r})(i\tilde{\kappa}_{j},x)-\sum\limits_{j=1}^{N}m_{j}^{-}\kappa_{j}^{2}(f_{r}\tilde{f}_{r})(i\kappa _{j},x)  \notag \\
=& \sum\limits_{j=1}^{\tilde{N}}\tilde{m}_{j}^{-}\tilde{\kappa}_{j}^{2}(f_{r}
\tilde{f}_{r})(i\tilde{\kappa}_{j},a)-\sum\limits_{j=1}^{N}m_{j}^{-}\kappa_{j}^{2}(f_{r}\tilde{f}_{r})(i\kappa _{j},a)  \notag \\
=& :C_{1}.  \label{4.1}
\end{align}%
Differentiating also Eq. (\ref{2.15}) with respect to $x$ for the fourth
time (i.e., differentiating Eq. (\ref{2.18}) with respect to $x$), we have
from (\ref{4.10}) and (\ref{4.1}) that
\begin{align}
& \sum\limits_{j=1}^{\tilde{N}}\tilde{m}_{j}^{-}\tilde{\kappa}_{j}^{2}[
\tilde{\kappa}_{j}^{2}(f_{r}\tilde{f}_{r})+(f_{r}^{\prime }\tilde{f}_{r}^{\prime })](i\tilde{\kappa}_{j},x)-
\sum\limits_{j=1}^{N}m_{j}^{-}\kappa_{j}^{2}[\kappa _{j}^{2}(f_{r}\tilde{f}_{r})+(f_{r}^{\prime }\tilde{f}_{r}^{\prime })](i\kappa _{j},x)  \notag \\
=&- V(x)\left[ \sum\limits_{j=1}^{\tilde{N}}\tilde{m}_{j}^{-}\tilde{\kappa}_{j}^{2}(f_{r}\tilde{f}_{r})(i\tilde{\kappa}_{j},x)-
\sum \limits_{j=1}^{N}m_{j}^{-}\kappa _{j}^{2}(f_{r}\tilde{f}_{r})(i\kappa _{j},x) \right]  \notag \\
=& -C_{1}V(x),\text{ \ \ }\mathrm{a.e.}\text{ }\mathrm{on}\text{ }[a,b].
\text{\ }  \label{4.14}
\end{align}%
Since the function $V(x)$ is not absolutely continuous on $[a,b]$, for the same reason of (\ref{3.11}), similar to (\ref{4.3}), we infer
\begin{equation}
C_{1}=0.  \label{4.13}
\end{equation}
Hence (\ref{4.1}) turns into
\begin{equation}
\sum\limits_{j=1}^{\tilde{N}}\tilde{m}_{j}^{-}\tilde{\kappa}_{j}^{2}(f_{r}
\tilde{f}_{r})(i\tilde{\kappa}_{j},x)-\sum\limits_{j=1}^{N}m_{j}^{-}\kappa_{j}^{2}(f_{r}\tilde{f}_{r})(i\kappa _{j},x)=0.  \label{3.14}
\end{equation}

Proceeding by induction, differentiating (\ref{2.15}) with respect to $x$
for $(2l+1)$ times, repeating the above proof for $l=0$ and $l=1$, and
making using of (\ref{4.10}) and (\ref{3.21}), analogous to (\ref{2.16}) and
(\ref{2.18}) we have for $x\in [a,b]$ that
\begin{equation*}
\sum\limits_{j=1}^{\tilde{N}}\tilde{m}_{j}^{-}(\tilde{\kappa}_{j}^{2})^{l}(f_{r}\tilde{f}_{r})^{\prime }(i\tilde{\kappa}_{j},x)-
\sum\limits_{j=1}^{N}m_{j}^{-}(\kappa _{j}^{2})^{l}(f_{r}\tilde{f}_{r})^{\prime }(i\kappa _{j},x)=0.
\end{equation*}
Integrating the above equation from $a$ to $x$ with $x\in [a,b],$
analogous to (\ref{2.15}) and (\ref{4.1}), we find
\begin{align}
& \sum\limits_{j=1}^{\tilde{N}}\tilde{m}_{j}^{-}(\tilde{\kappa}_{j}^{2})^{l}(f_{r}\tilde{f}_{r})(i\tilde{\kappa}_{j},x)-
\sum \limits_{j=1}^{N}m_{j}^{-}(\kappa _{j}^{2})^{l}(f_{r}\tilde{f}_{r})(i\kappa_{j},x)  \notag \\
=& \sum\limits_{j=1}^{\tilde{N}}\tilde{m}_{j}^{-}(\tilde{\kappa}_{j}^{2})^{l}(f_{r}\tilde{f}_{r})(i\tilde{\kappa}_{j},a)-
\sum \limits_{j=1}^{N}m_{j}^{-}(\kappa _{j}^{2})^{l}(f_{r}\tilde{f}_{r})(i\kappa_{j},a)  \notag \\
=& :C_{l}.  \label{4.12}
\end{align}
Differentiating also Eq. (\ref{2.15}) with respect to $x$ for $(2l+2)$
times, we have from (\ref{4.10}) and (\ref{4.12}) that
\begin{align*}
& \sum\limits_{j=1}^{\tilde{N}}\tilde{m}_{j}^{-}(\tilde{\kappa}_{j}^{2})^{l}[
\tilde{\kappa}_{j}^{2}(f_{r}\tilde{f}_{r})+(f_{r}^{\prime }\tilde{f}_{r}^{\prime })](i\tilde{\kappa}_{j},x)-
\sum\limits_{j=1}^{N}m_{j}^{-}(\kappa _{j}^{2})^{l}[\kappa _{j}^{2}(f_{r}\tilde{f}_{r})+(f_{r}^{\prime }
\tilde{f}_{r}^{\prime })](i\kappa _{j},x) \\
=& -V(x)\left[ \sum\limits_{j=1}^{\tilde{N}}\tilde{m}_{j}^{-}(\tilde{\kappa}_{j}^{2})^{l}(f_{r}\tilde{f}_{r})(i\tilde{\kappa}_{j},x)-
\sum \limits_{j=1}^{N}m_{j}^{-}(\kappa _{j}^{2})^{l}(f_{r}\tilde{f}_{r})(i\kappa_{j},x)\right] \\
=& -C_{l}V(x)\text{ \ \ }\mathrm{a.e.}\text{ }\mathrm{on}\text{ }[a,b].\text{\ }
\end{align*}
Based on the fact that the function $V(x)$ is not absolutely continuous on $[a,b]$, for the same reason of (\ref{3.11}) and (\ref{4.14}), similar to (\ref{4.3}) and (\ref{4.13}), we infer $C_{l}=0$ for
$l=2,\cdots,2M-1.$ This together with (\ref{4.12}) yields that (\ref{2.7})
holds.

\textbf{Step 2. }We show that
\begin{equation}
N=\tilde{N}\text{ \ }\mathrm{and}\text{ \ \ }\kappa _{j}=\tilde{\kappa}_{j},
\text{ }m_{j}^{-}=\tilde{m}_{j}^{-}\text{ \ }\mathrm{for}\text{ \ \ \ }
j=1,\cdots ,N.  \label{4.18}
\end{equation}

Without loss of generality, we assume $N>\tilde{N}.$ Since $V(x)=\tilde{V}(x)
$ a.e. on $[a,b],$ $\tilde{f}_{r}(k,x)$ and $f_{r}(k,x)$ both are nontrivial
solutions of Eq. (\ref{4.20}). This together with Lemma \ref{blem2.1}
implies that there exists a common point $x^{\prime }\in (a,b)$ such that
\begin{equation}
(f_{r}\tilde{f}_{r})(i\kappa _{i},x^{\prime })\neq 0\text{ \ \ }\mathrm{for}
\text{ }\mathrm{all}\text{ \ }i=1,\cdots ,N,  \label{4.23}
\end{equation}%
and
\begin{equation*}
(f_{r}\tilde{f}_{r})(i\tilde{\kappa}_{j},x^{\prime })\neq 0\text{ \ \ }
\mathrm{for}\text{ }\mathrm{all}\text{ \ }j=1,\cdots ,\tilde{N}.
\end{equation*}%
Denote by $V_{(N+\tilde{N})\times N}[\kappa _{j}^{2}]_{j=1}^{N}$ the
Vandermonde matrix associated with $\{\kappa _{j}^{2}\}_{j=1}^{N}.$ Note
that the Jost solution $f_{r}(k,x)$ of Eq. (\ref{2.1}) satisfies the reality
conditions $\overline{f_{r}(k,x)}=f_{r}(-\overline{k},x)$ for\ $\mathrm{Im}k\geq 0$ (see, for example, \cite[p.130]{dei}),
this gives that for all $k=i\kappa_{j}$ and $k=i\tilde{\kappa}_{j},$ the functions $f_{r}(k,x)$ and $\tilde{f}_{r}(k,x)$ both are real-valued. Denote the vector $A=(a_{1},\cdots
,a_{N})^{T}\in \mathbb{R}^{N}$ with
\begin{equation*}
a_{j}=m_{j}^{-}(f_{r}\tilde{f}_{r})(i\kappa _{j},x^{\prime }).
\end{equation*}
Similar notations can also be introduced for $\{\tilde{\kappa}_{j}^{2}\}_{j=1}^{\tilde{N}}$ corresponding to Vandermonde matrix the
$V_{(N+\tilde{N})\times \tilde{N}}[\tilde{\kappa}_{j}^{2}]_{j=1}^{\tilde{N}}$ and $\tilde{A}=(\tilde{a}_{1},\cdots ,\tilde{a}_{\tilde{N}})^{T}$ $\in \mathbb{R}^{\tilde{N}}$ with
\begin{equation*}
\tilde{a}_{j}=\tilde{m}_{j}^{-}(f_{r}\tilde{f}_{r})(i\tilde{\kappa}_{j},x^{\prime }).
\end{equation*}
Then by (\ref{2.7}) and $M=N+\tilde{N}$ we have
\begin{equation}
V_{M\times N}[\kappa _{j}^{2}]_{j=1}^{N}A=V_{M\times \tilde{N}}[\tilde{\kappa}_{j}^{2}]_{j=1}^{\tilde{N}}\tilde{A}.  \label{4.21}
\end{equation}
Applying Lemma \ref{blem2.2} to (\ref{4.21}) with $\lambda_{j}=\kappa_{j}^{2},\ \tilde{\lambda}_{j}=\tilde{\kappa}_{j}^{2},\ n=N$, $\tilde{n}=\tilde{N}$, we easily conclude that
\begin{equation}
\kappa_{j}=\tilde{\kappa}_{j},\text{ \ \ \ }m_{j}^{-}(f_{r}\tilde{f}_{r})(i\kappa _{j},
x^{\prime })=\tilde{m}_{j}^{-}(f_{r}\tilde{f}_{r})(i \tilde{\kappa}_{j},x^{\prime }),\text{ \ \ \ \ }j=1,\cdots ,\tilde{N},
\label{2.35}
\end{equation}
and further
\begin{equation}
m_{j}^{-}(f_{r}\tilde{f}_{r})(i\kappa _{j},x^{\prime })=0\text{ \ }\mathrm{for}\text{ \ \ }j=\tilde{N}+1,\cdots ,N.  \label{4.22}
\end{equation}
Thus a contradiction follows from (\ref{4.23}) and (\ref{4.22}). Therefore $N=\tilde{N},$ and (\ref{2.35}) further implies that
$\kappa_{j}=\tilde{\kappa}_{j}$ and $m_{j}^{-}=\tilde{m}_{j}^{-}$ for $j=1,\cdots ,N.$

Once we obtain (\ref{4.18}), by Marchenko's uniqueness theorem \cite{mar} we
have $V=\tilde{V}$\ a.e. on $\mathbb{R}.$ The proof is complete.
\end{proof}

\bigbreak

\textbf{Acknowledgement.} The authors would like to thank the referee for careful
reading of the manuscript and helping us to improve the presentation by providing
valuable and insightful comments. The research was supported in part by the NNSF
(11571212, 11601299), the Major Programs of National Natural Science Foundation of China (41390454), 
the China Postdoctoral Science Foundation
(2016M600760) and the Fundamental Research Funds for the Central
Universities (GK 201603007).

\end{document}